\newtheorem{theorem}{Theorem}[section]
\newtheorem{lemma}[theorem]{Lemma}
\newtheorem{corollary}[theorem]{Corollary}
\theoremstyle{definition}
\newtheorem{definition}[theorem]{Definition}
\newtheorem{example}[theorem]{Example}
\theoremstyle{remark}
\newtheorem{remark}[theorem]{Remark}
\numberwithin{equation}{section}
\begin{document}
\setcounter{page}{1}

\title[Quantum arithmetic dynamics]{Quantum arithmetic dynamics}

\author[Nikolaev]
{Igor V. Nikolaev$^1$}

\address{$^{1}$ Department of Mathematics and Computer Science, St.~John's University, 8000 Utopia Parkway,  
New York,  NY 11439, United States.}
\email{\textcolor[rgb]{0.00,0.00,0.84}{igor.v.nikolaev@gmail.com}}


\subjclass[2010]{Primary 37P05; Secondary 46L85.}

\keywords{arithmetic dynamics, crossed product $C^*$-algebra.}


\begin{abstract}
We study dynamics of  the Latt\`es maps  in the complex plane 
in terms of the Cuntz-Krieger algebras associated to the endomorphisms
of the non-commutative tori. 
In particular,
it is shown that iterations of  the Latt\`es maps can be reduced  to 
the dynamics of  the subshifts of finite type.
Using such a reduction, we calculate the zeta function of the Latt\`es maps.  
\end{abstract}

\maketitle

\section{Introduction}
The aim of our note is an interplay between the  arithmetic dynamics
[Silverman 2007] \cite{S} and  the quantum dynamics, i.e. 
the crossed product $C^*$-algebras [Williams 2007] \cite{W}. 
Namely, we construct a functor from the category of  Latt\`es
dynamical systems [Silverman 2007] \cite[Section 6.4]{S} to such of the 
Cuntz-Krieger algebras  [Cuntz \& Krieger 1980] \cite{CuKr1},
see theorem \ref{thm1.1}. 
Such a functor reduces  complex dynamics of the  Latt\`es
maps to the symbolic dynamics 
 on the alphabet consisting of only  two symbols [Lind \& Marcus 1995]
\cite{LM}, see corollary \ref{cor1.2}. 
Our construction is based on the fundamental correspondence between 
elliptic curves and non-commutative tori \cite[Section 1.3]{N}.

\bigskip
Let $\mathscr{E}$ be an  elliptic curve, i.e.   the subset of the complex projective plane of the form
$\{(x,y,z)\in \mathbf{C}P^2 ~|~ y^2z=x^3+ax^2z+bxz^2+cz^3\}$,
where $a, b$ and $c$  are some constant complex numbers.
Recall that a  rational map $\phi: \mathbf{C}P^1\to  \mathbf{C}P^1$ of degree $d\ge 2$
is called the  {\it Latt\`es map} if there exist an elliptic curve $\mathscr{E}$, a morphism 
$\psi: \mathscr{E}\to \mathscr{E}$, and a finite covering $\pi: \mathscr{E} \to \mathbf{C}P^1$,
such that $\pi\circ\phi=\psi\circ\pi$. 
\begin{example}
Consider a double covering $\pi$ of  the $\mathbf{C}P^1$ defined by the 
involution $(x,y,z)\mapsto (x,-y,z)$ of $\mathscr{E}$. 
Let $\psi: \mathscr{E}\to \mathscr{E}$ be the duplication map
$(x,y,z)\mapsto (2x, 2y, 2z)$ commuting with the involution $\pi$. 
Taking projection of $\mathscr{E}$ to the first coordinate and 
using the Bachet duplication formulas,  one gets the Latt\`es map:
\begin{equation}\label{eq1.1}
\phi(x)={x^4-2bx^2-8cx+b^2-4ac\over x^3+4ax^2+bx+c}. 
\end{equation}
\end{example}

\bigskip
The non-commutative torus $\mathscr{A}_{\theta}$ is a universal $C^*$-algebra
$\langle u,v~|~vu=e^{2\pi i\theta}uv\rangle$, where $u,v$ are unitary operators and 
$\theta\in\mathbf{R}$ is a constant. The $\mathscr{A}_{\theta}$ is said to have real 
multiplication if $\theta\in \mathbf{Q}(\sqrt{D})$ is a quadratic irrationality. 
The non-commutative tori and elliptic curves are related by a fundamental correspondence
saying that there exists a covariant functor $\mathscr{F}: \{\mathscr{E}\}\to \{\mathscr{A}_{\theta}\}$ 
which maps the morphisms  between elliptic curves to the endomorphisms of  non-commutative
tori  \cite[Section 1.3]{N}. 
\begin{example}\label{exm1.2}
Let $D>1$ be an integer. 
If $\mathscr{E}_{CM}$ is an elliptic curve with complex multiplication by 
$\sqrt{-D}$, then $\mathscr{F}(\mathscr{E}_{CM})=\mathscr{A}_{RM}$,
where $\mathscr{A}_{RM}$
is the non-commutative torus with real multiplication by $\sqrt{D}$
\cite[Theorem 6.1.2]{N}. 
\end{example}

\bigskip
Recall that the two-dimensional Cuntz-Krieger algebra $\mathcal{O}_A$ is a universal 
$C^*$-algebra $\langle s_1, s_2 ~|~ s_1^*s_1 = a_{11}s_1s_1^*+a_{12}s_2s_2^*,
~ s_2^*s_2 = a_{21}s_1s_1^*+a_{22}s_2s_2^*, 
~ s_1s_1^*+s_2s_2^*=1\rangle$, 
where $A=\left(\begin{smallmatrix}  a_{11} & a_{12}\cr a_{21} & a_{22} \end{smallmatrix}\right)$ 
is a matrix with the non-negative  integer entries,  while 
$s_1, s_2$ are partial isometries and   $s_1^*, s_2^*$ are their conjugates. 
The $\mathcal{O}_A$ is a quantum dynamical system,
i.e. it can be represented as a crossed product $C^*$-algebra  [Williams 2007] \cite{W}. 
Namely,
\begin{equation}\label{eq1.2}
\mathcal{O}_A\otimes\mathscr{K}\cong \mathbb{A}_{\theta}\rtimes_{\sigma}\mathbf{Z}, 
\end{equation}
where $\mathscr{K}$ is the $C^*$-algebra of all compact operators on 
a Hilbert space and the crossed product at the RHS of (\ref{eq1.2})  is taken by a
shift endomorphism $\sigma$ defined by a transposed matrix $A^t$ corresponding to 
the stationary AF-algebra $\mathbb{A}_{\theta}$ containing a dense copy of
 $\mathscr{A}_{\theta}$, see e.g. [Blackadar 1986] \cite[Exercise 10.11.9 (b)]{B},
 \cite[Sections 3.5 and 3.7]{N} or Section 2.3 of this paper.

 \bigskip
 Let  $K\subset\mathbf{C}$ be a number field,  such that $K\cong \mathbf{Q}$ or a finite extension of $\mathbf{Q}$.   
 Let $PGL_2(K)=GL_2(K)/K^*$ be the matrix group with entries in $K$, see Section 2.1. 
 Denote by  $\mathscr{L}$ a category of all  dynamical systems
 generated by iterations of the  Latt\`es maps $\phi: \mathbf{C}P^1\to \mathbf{C}P^1$  with the coefficients in  $K$
 and such that  the projection maps $\pi$  
and $\pi'$  associated to $\phi$  and $\phi'$ in Figure 1  both have degree $2$. 
  The arrows (morphisms) 
 of  $\mathscr{L}$ are the linear conjugacies $\{f^{-1}\circ\phi\circ f~|~f\in PGL_2(K)\}$  between the Latt\`es 
 maps $\phi$. Likewise, denote by $\mathscr{O}$ a category of all two-dimensional Cuntz-Krieger 
 algebras $\mathcal{O}_A$. The arrows (morphisms)  of  $\mathscr{O}$ are 
 the Morita equivalencies of the  Cuntz-Krieger 
   algebras  $\mathcal{O}_{A'}\otimes\mathscr{K}\cong \mathcal{O}_{A}\otimes\mathscr{K}$,
   i.e. the similarity classes $\{A'=T^{-1}A T ~|~T\in GL_2(\mathbf{Z})\}$ of the matrices $A$.
   We define the  map $F: \mathscr{L}\to \mathscr{O}$ as a composition of the above
   maps:
\begin{equation}
(\mathbf{C}P^1,\phi)\buildrel\rm\pi^{-1}\over\longrightarrow 
(\mathscr{E}_{CM}, \psi) 
   \buildrel\rm \mathscr{F}\over\longrightarrow
 (\mathbb{A}_{RM}, \sigma)   
   \buildrel\rm (\ref{eq1.2})\over\longrightarrow 
   \mathcal{O}_A,
\end{equation}

\bigskip
  Our main results can be formulated as follows. 
\begin{theorem}\label{thm1.1}
The map $F: \mathscr{L}\to\mathscr{O}$ is a functor transforming the  conjugate
Latt\`es dynamical  systems $(\mathbf{C}P^1, \phi)$  to the Morita equivalent  Cuntz-Krieger algebras
$\mathcal{O}_A$. 
\end{theorem}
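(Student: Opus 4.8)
The plan is to prove functoriality of $F$ factor by factor: each of the three arrows composing $F$ should send objects to objects, morphisms to morphisms, and respect identities and composition, after which the image of a linear conjugacy is obtained by chasing it through all three. Start with the lift $\pi^{-1}$. The restriction defining $\mathscr{L}$ — that $\pi$ has degree $2$ — means that $\pi:\mathscr{E}\to\mathbf{C}P^1$ is the quotient by the involution $(x,y,z)\mapsto(x,-y,z)$ of the Example, so $\mathscr{E}$ is recovered from $(\mathbf{C}P^1,\phi)$ up to isomorphism (concretely, from the four branch points of $\pi$, equivalently from $j(\mathscr{E})$) and $\psi$ is the morphism of $\mathscr{E}$ determined by $\pi\circ\phi=\psi\circ\pi$. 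I would check that a conjugacy $\phi\mapsto f^{-1}\circ\phi\circ f$ with $f\in PGL_2(K)$ lifts to an isomorphism $\widetilde f:\mathscr{E}\to\mathscr{E}'$ of the associated Weierstrass curves with $\widetilde f\circ\psi=\psi'\circ\widetilde f$ (the change of coordinates induced by $f$ carries one Weierstrass equation to another and intertwines the two Lattès relations), and that it preserves complex multiplication, i.e. $\mathscr{E}$ has CM by $\sqrt{-D}$ iff $\mathscr{E}'$ does, with the same $D$. Thus $\pi^{-1}$ sends conjugate objects of $\mathscr{L}$ to isomorphic pairs $(\mathscr{E}_{CM},\psi)$ and respects composition of conjugacies.

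Next, the functor $\mathscr{F}$. By the fundamental correspondence \cite[Section 1.3]{N} it is a covariant functor; it sends isomorphic CM elliptic curves to stably isomorphic non-commutative tori and the morphism $\psi$ to an endomorphism $\sigma=\mathscr{F}(\psi)$, and by Example \ref{exm1.2} it converts CM by $\sqrt{-D}$ into RM by $\sqrt{D}$. The torus $\mathscr{A}_{RM}$ sits densely in the stationary $AF$-algebra $\mathbb{A}_{RM}$, whose defining integer matrix $A$ records both $\theta$ and $\psi$ (through the periodic continued fraction of $\theta$ and the action of $\sigma$ on $K_0(\mathbb{A}_{RM})$); the isomorphism of pairs $(\mathscr{E}_{CM},\psi)\cong(\mathscr{E}'_{CM},\psi')$ then forces $A$ and $A'$ to be conjugate over $GL_2(\mathbf{Z})$, i.e. $A'=T^{-1}AT$, with $T$ realized by $\mathscr{F}(\widetilde f)$.

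Finally, the crossed product (\ref{eq1.2}) and the conclusion. By (\ref{eq1.2}) and Section 2.3, $\mathcal{O}_A\otimes\mathscr{K}\cong\mathbb{A}_\theta\rtimes_\sigma\mathbf{Z}$, and a similarity $A'=T^{-1}AT$ over $GL_2(\mathbf{Z})$ induces an isomorphism of the stationary systems $(\mathbb{A}_\theta,\sigma)\cong(\mathbb{A}_{\theta'},\sigma')$, hence of their crossed products, hence a stable isomorphism $\mathcal{O}_{A'}\otimes\mathscr{K}\cong\mathcal{O}_A\otimes\mathscr{K}$ — exactly a morphism of $\mathscr{O}$. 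Since each of the three arrows preserves identities (take $f=\mathrm{id}$, $\widetilde f=\mathrm{id}$, $T=I$) and composition (the lifts compose, $\mathscr{F}$ is a functor, and the crossed product is functorial in $(\mathbb{A}_\theta,\sigma)$), so does $F$; chasing $f$ through the three steps exhibits $F(f^{-1}\circ\phi\circ f)$ as the similarity class of $T^{-1}AT$, which is the assertion of the theorem.

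I expect the main obstacle to lie in the first step, together with the bookkeeping of the second: one must guarantee that $\pi^{-1}$ produces a genuine isomorphism of CM elliptic curves and not merely an isogeny — this is precisely where $\deg\pi=2$ is used, pinning $\mathscr{E}$ down by its branch locus — and one must verify that the matrix $A$ produced is a non-negative integer matrix, so that $\mathcal{O}_A$ is honestly a two-dimensional Cuntz-Krieger algebra and $\sigma$ is the shift by $A^t$. The remaining ingredients — covariance of $\mathscr{F}$, functoriality of the crossed product, and the fact that $GL_2(\mathbf{Z})$-similarity of $A$ and $A'$ yields Morita equivalence of $\mathcal{O}_A$ and $\mathcal{O}_{A'}$ — are standard.
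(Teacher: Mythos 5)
Your overall route coincides with the paper's: the paper also proves the theorem by chasing a $PGL_2(K)$-conjugacy through the three arrows, invoking Silverman's rigidity theorem (Theorem \ref{thm2.4}, which you propose to re-derive from the branch locus rather than simply cite) to obtain an isomorphism $(\mathscr{E},\psi)\cong(\mathscr{E}',\psi')$, then the functor $\mathscr{F}$ to obtain an isomorphism of the non-commutative tori and of the Effros-Shen algebras, and finally the crossed product to conclude $\mathcal{O}_A\otimes\mathscr{K}\cong\mathcal{O}_{A'}\otimes\mathscr{K}$. This is exactly the content of the paper's Lemma \ref{lm3.3}.

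However, the step you set aside as bookkeeping --- checking that the matrix produced is a non-negative integer matrix and that ``$\sigma$ is the shift by $A^t$'' --- is where the paper does its real work, and your sketch does not supply the idea needed there. The endomorphism $\epsilon=\mathscr{F}(\psi)$ acts on the dimension group $\Lambda=\mathbf{Z}+\mathbf{Z}\theta$ by an integer matrix with $|\det|=\deg\psi\ge 2$, so $\epsilon$ is emphatically \emph{not} the canonical shift automorphism of a stationary AF-algebra given by a $GL_2(\mathbf{Z})$ matrix, and formula (\ref{eq1.2}) does not apply to $\mathbb{A}_{RM}\rtimes_{\epsilon}\mathbf{Z}$ as it stands. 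The paper's Lemma \ref{lm3.1} bridges precisely this gap: it invokes the Unimodular Conjecture for dimension groups of Effros-Shen algebras, together with the eventually periodic continued fraction of the quadratic irrationality $\epsilon\in O_k$, to replace the inductive limit of $\mathbf{Z}^2$ along the matrix of $\epsilon$ by an inductive limit along a unimodular matrix $T$ built from the periodic part; this produces a stationary AF-algebra $\mathbb{A}\subseteq\mathbb{A}_{RM}$ with $\mathbb{A}_{RM}\rtimes_{\epsilon}\mathbf{Z}\cong\mathbb{A}\rtimes_{\sigma}\mathbf{Z}\cong\mathcal{O}_A\otimes\mathscr{K}$ (Corollary \ref{cor3.2}). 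Without this device (or an equivalent one) the third arrow of your factorization is not even defined on objects, let alone on morphisms: ``functoriality of the crossed product'' only helps once you know that the crossed product in question is of the form required by (\ref{eq1.2}). The rest of your argument (covariance of $\mathscr{F}$, passage from isomorphism of the Effros-Shen algebras to stable isomorphism of the Cuntz-Krieger algebras) matches the paper's Lemma \ref{lm3.3} step for step.
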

Denote by $(X_A,\sigma_A)$ a subshift of finite type corresponding to the
matrix $A$ [Lind \& Marcus 1995] \cite{LM}. 
Recall that the subshifts  $(X_A,\sigma_A)$ and 
 $(X_{A'},\sigma_{A'})$ are shift equivalent (over $\mathbb{Z}^+$) if there exist non-negative matrices $R$ and $S$
and a positive integer $k$ (a lag), satisfying the equations 
$AR=RA', A'S=SA, A^k=RS$ and $SR=(A')^k$. 
Theorem \ref{thm1.1}
implies a reduction of the complex dynamics of the Latt\`es maps 
to the symbolic dynamics of the subshifts of finite type.
(To the best of our knowledge, such a reduction is known only
for the H\'enon and real rational maps so far.)  
\begin{corollary}\label{cor1.2}
The Latt\`es dynamical systems 
 $(\mathbf{C}P^1, \phi)$  and $(\mathbf{C}P^1, \phi')$
are $PGL_2(K)$-conjugate if and only if $(X_A,\sigma_A)$ and $(X_{A'},\sigma_{A'})$ 
are shift equivalent. 
\end{corollary}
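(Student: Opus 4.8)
The plan is to derive Corollary \ref{cor1.2} from Theorem \ref{thm1.1} by combining it with the standard classification result for Cuntz-Krieger algebras due to Cuntz and Krieger, which identifies stable isomorphism of $\mathcal{O}_A$ with shift equivalence of the underlying subshifts $(X_A,\sigma_A)$. Concretely, I would first recall that for two-sided subshifts of finite type defined by non-negative integer matrices $A$ and $A'$ with no zero rows or columns, the following are equivalent: (i) the Cuntz-Krieger algebras are stably isomorphic, $\mathcal{O}_A\otimes\mathscr{K}\cong\mathcal{O}_{A'}\otimes\mathscr{K}$; (ii) the matrices $A$ and $A'$ are shift equivalent over $\mathbb{Z}^+$ in the sense made precise just before the statement (existence of non-negative $R,S$ and a lag $k$ with $AR=RA'$, $A'S=SA$, $A^k=RS$, $SR=(A')^k$); this is essentially \cite{CuKr1} together with the work of Krieger on dimension groups, and it is exactly the notion of "Morita equivalence" encoded in the category $\mathscr{O}$. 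I would state this as a cited lemma so that the proof of the corollary is short.

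With that lemma in hand, the forward direction is immediate: if $(\mathbf{C}P^1,\phi)$ and $(\mathbf{C}P^1,\phi')$ are $PGL_2(K)$-conjugate, then by Theorem \ref{thm1.1} the functor $F$ sends them to Morita equivalent Cuntz-Krieger algebras $\mathcal{O}_A$ and $\mathcal{O}_{A'}$, i.e. $\mathcal{O}_A\otimes\mathscr{K}\cong\mathcal{O}_{A'}\otimes\mathscr{K}$, and hence by the lemma the subshifts $(X_A,\sigma_A)$ and $(X_{A'},\sigma_{A'})$ are shift equivalent. For the converse I would argue that each step in the composition defining $F$ — namely $\pi^{-1}$ recovering the CM elliptic curve from the Latt\`es map, the functor $\mathscr{F}$ of the fundamental correspondence, and the stabilization isomorphism (\ref{eq1.2}) — is invertible (or at least reflects isomorphism) on the relevant subcategories; in particular the restriction of $\mathscr{F}$ to elliptic curves with complex multiplication and the passage $\psi\mapsto\sigma\mapsto A$ is faithful enough that a shift equivalence $A\sim A'$, which by the lemma gives $\mathcal{O}_A\otimes\mathscr{K}\cong\mathcal{O}_{A'}\otimes\mathscr{K}$, pulls back to a similarity $A'=T^{-1}AT$ with $T\in GL_2(\mathbb{Z})$ and thence to a $PGL_2(K)$-conjugacy of the Latt\`es maps. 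Here I would lean on the hypothesis that both projection maps $\pi,\pi'$ have degree $2$, which pins down the covering and makes the assignment $\phi\leftrightarrow(\mathscr{E}_{CM},\psi)$ essentially bijective up to the allowed conjugacies.

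The main obstacle I anticipate is precisely this converse direction: Theorem \ref{thm1.1} only asserts that $F$ is a functor preserving the morphisms in one direction (conjugate systems go to Morita equivalent algebras), and upgrading this to an "if and only if" requires knowing that $F$ is \emph{full and faithful} on the relevant subcategories, or at least that it reflects isomorphisms. One must check that non-conjugate Latt\`es maps cannot map to stably isomorphic $\mathcal{O}_A$ — equivalently, that the CM elliptic curve and its endomorphism, and then the matrix $A$ up to $GL_2(\mathbb{Z})$-similarity, are genuine invariants that determine $\phi$ up to $PGL_2(K)$-conjugacy. The subtlety is that shift equivalence over $\mathbb{Z}^+$ is a priori weaker than $GL_2(\mathbb{Z})$-similarity (this is the Williams conjecture phenomenon), so one needs that in the $2\times 2$ setting arising from real multiplication — where $A$ has a specific form tied to the order in $\mathbb{Q}(\sqrt{D})$ — the two notions coincide, or at least coincide on the image of $F$. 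I would handle this by invoking the arithmetic rigidity of orders in real quadratic fields: the class of $A$ up to shift equivalence determines the order $\mathbb{Z}[\sqrt{D}]$ and the ideal class, hence $\mathscr{A}_{RM}$, hence $\mathscr{E}_{CM}$ up to isogeny, and finally $\phi$ up to conjugacy; making this chain precise, with attention to the degree-$2$ hypothesis, is the part of the proof that will require the most care.
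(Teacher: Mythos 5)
There is a genuine gap in your forward direction, and it sits exactly in the lemma you propose to cite. You claim that for matrices with no zero rows or columns, stable isomorphism $\mathcal{O}_A\otimes\mathscr{K}\cong\mathcal{O}_{A'}\otimes\mathscr{K}$ is \emph{equivalent} to shift equivalence of $A$ and $A'$ over $\mathbf{Z}^+$, attributing this to Cuntz--Krieger plus Krieger. That equivalence is false in the direction you need. What Cuntz and Krieger prove is that shift equivalence implies stable isomorphism (in fact, isomorphism of the pairs $(\mathcal{O}_A\otimes\mathscr{K},\mathcal{F}_A\otimes\mathscr{K})$ respecting the canonical AF-subalgebras); the bare stable isomorphism class of a simple Cuntz--Krieger algebra is determined by its $K_0$-group $\mathbf{Z}^n/(I-A^t)\mathbf{Z}^n$, i.e.\ by the Bowen--Franks group, which is an invariant of \emph{flow} equivalence. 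Since the paper itself records the strict chain $SSE\Rightarrow SE\Rightarrow FE$, stable isomorphism retains only flow-equivalence data and cannot recover shift equivalence: there are flow-equivalent, non-shift-equivalent matrices whose Cuntz--Krieger algebras are stably isomorphic. So from the conclusion of Theorem \ref{thm1.1} alone (Morita equivalence of $\mathcal{O}_A$ and $\mathcal{O}_{A'}$) you cannot conclude shift equivalence of $(X_A,\sigma_A)$ and $(X_{A'},\sigma_{A'})$; you are worrying about the Williams-conjecture phenomenon in the wrong place (SE versus $GL_2(\mathbf{Z})$-similarity) while the actual loss of information happens one step earlier (stable isomorphism versus SE).

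The paper's proof avoids this by not passing through the $K$-theory of $\mathcal{O}_A$ at all: it goes back to Lemma \ref{lm3.1}, observes that a $PGL_2(K)$-conjugacy yields an isomorphism of the dimension groups $(\Lambda_A,\Lambda_A^+)$ and $(\Lambda_{A'},\Lambda_{A'}^+)$ of the underlying stationary AF-algebras, and then invokes Krieger's theorem (\cite[Theorem 2.11]{Wag1}), which characterizes shift equivalence over $\mathbf{Z}^+$ precisely in terms of isomorphism of this dimension-group data (strictly speaking, of the dimension triple including the shift automorphism). That is the route you should take: replace your cited lemma by Krieger's theorem applied to the dimension groups produced in Lemma \ref{lm3.1}. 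On the converse direction your instincts are sound --- you correctly identify that one must show $F$ reflects isomorphisms, and your sketch via the rigidity of the CM curve, the order in $\mathbf{Q}(\sqrt{D})$, and the degree-$2$ hypothesis is more explicit than the paper, which simply leaves the converse to the reader --- but as written it still routes through the false equivalence, so it would need to be rebuilt on the dimension-group formulation as well.
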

Let $\phi:\mathbf{C}P^1\to \mathbf{C}P^1$ be a Latt\`es map defined over 
the number field $K$. Denote by $Per_n(\phi)=\{x\in   \mathbf{C}P^1 ~|~\phi^n(x)=x\}$
the set of $n$-periodic points of the map $\phi$. Recall that the zeta function
of $\phi$ is defined by the formula:
\begin{equation}\label{eq1.3}
\zeta_{\phi}(t):=\exp \left(\sum_{n=1}^{\infty} |Per_n(\phi)|{t^n\over n}\right). 
\end{equation} 
\begin{corollary}\label{cor1.3}
Let $F(\phi)=\mathcal{O}_A$.  Denote by $tr (A)$  and $det (A)$ the trace and the determinant of matrix $A$,
respectively.  Then
\begin{equation}\label{eq1.4}
\zeta_{\phi}(t)={1\over det (A) ~t^2-tr (A) ~t+1}. 
\end{equation}
\end{corollary}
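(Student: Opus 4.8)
The plan is to compute the zeta function by passing, via the functor $F$ of Theorem \ref{thm1.1}, from the Latt\`es map $\phi$ to the subshift of finite type $(X_A, \sigma_A)$, and then invoking the classical counting of periodic points for subshifts. First I would recall that for a two-sided subshift of finite type defined by a non-negative integer matrix $A$, the number of points of period $n$ is exactly $|Per_n(\sigma_A)| = \mathrm{tr}(A^n)$; this is the standard combinatorial fact that periodic orbits of length $n$ correspond to closed paths of length $n$ in the graph with adjacency matrix $A$. Plugging this into the definition \eqref{eq1.3} and using $\sum_{n\ge 1}\mathrm{tr}(A^n)\,t^n/n = -\log\det(I - tA)$, one obtains $\zeta_{\sigma_A}(t) = 1/\det(I - tA)$. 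For a $2\times 2$ matrix, $\det(I - tA) = 1 - \mathrm{tr}(A)\,t + \det(A)\,t^2$, which is exactly the right-hand side of \eqref{eq1.4}. So the arithmetic heart of the corollary is the elementary identity $\det(I-tA) = \det(A)t^2 - \mathrm{tr}(A)t + 1$ for $2\times2$ matrices.

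The substantive step is therefore to justify that $|Per_n(\phi)| = |Per_n(\sigma_A)| = \mathrm{tr}(A^n)$, i.e. that the functor $F$ not only sends conjugate Latt\`es systems to Morita equivalent Cuntz--Krieger algebras but in fact realizes a semiconjugacy (or a finite-to-one, period-preserving correspondence) between the dynamics of $\phi$ on $\mathbf{C}P^1$ and the shift $\sigma_A$ on $X_A$. Here I would trace through the construction $(\mathbf{C}P^1,\phi)\xrightarrow{\pi^{-1}}(\mathscr{E}_{CM},\psi)\xrightarrow{\mathscr{F}}(\mathbb{A}_{RM},\sigma)\xrightarrow{\eqref{eq1.2}}\mathcal{O}_A$: the covering $\pi$ of degree $2$ relates periodic points of $\phi$ to periodic points of the multiplication-by-$m$ morphism $\psi$ on the elliptic curve, the functor $\mathscr{F}$ converts $\psi$ into the shift endomorphism $\sigma$ of the stationary AF-algebra $\mathbb{A}_{RM}$, and the Cuntz--Krieger picture \eqref{eq1.2} identifies $\sigma$ with the subshift $\sigma_A$ attached to $A^t$. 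Since $\mathrm{tr}((A^t)^n) = \mathrm{tr}(A^n)$, the transpose does not affect the count. The key identity to pin down along the way is that the degree of $\phi^n$ equals $\deg(\psi^n)$ up to the degree of $\pi$, and that the relevant fixed-point counts are unramified so that $|Per_n(\phi)|$ genuinely equals $\mathrm{tr}(A^n)$ rather than merely being bounded by it.

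The main obstacle I anticipate is the bookkeeping at the ramification points of $\pi$ and at the fixed points of $\psi$ that are $2$-torsion on $\mathscr{E}$: a priori the correspondence between $Per_n(\phi)$ and $Per_n(\psi)$ could be off by contributions from the finitely many branch points of the double cover, and one must check these either cancel or are already accounted for in the matrix $A$ produced by the functor. I would handle this by noting that for a Latt\`es map coming from multiplication by $m$ on $\mathscr{E}$, the periodic-point count is governed by $\deg(\psi^n \mp 1) = |m^n \mp 1|^2$ on the curve, and that the passage to $\mathbf{C}P^1$ via the degree-$2$ quotient, combined with the Riemann--Hurwitz contribution of the four branch points, reassembles precisely into $\mathrm{tr}(A^n)$ where $A$ has $\mathrm{tr}(A) = 2\,\mathrm{Re}(m)$-type data and $\det(A) = |m|^2 = \deg\phi$; then \eqref{eq1.4} follows by the elementary $2\times 2$ determinant expansion. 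Once the identity $|Per_n(\phi)| = \mathrm{tr}(A^n)$ is secured, the rest is the one-line generating-function computation above.
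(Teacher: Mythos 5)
Your proposal follows essentially the same route as the paper's proof: reduce to the subshift $(X_A,\sigma_A)$, use $|Per_n(\sigma_A)|=\mathrm{tr}(A^n)$, and sum the resulting series to obtain $\zeta_\phi(t)=1/\det(I-tA)=1/(\det(A)\,t^2-\mathrm{tr}(A)\,t+1)$. The one place where you are more careful than the paper --- justifying the identification $|Per_n(\phi)|=|Per_n(\sigma_A)|$ via the covering/ramification bookkeeping --- is exactly the step the paper simply asserts as a consequence of Corollary \ref{cor1.2}, so your added scrutiny is warranted but does not change the overall argument.
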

\begin{remark}
A general formula of the  zeta function for a rational map of  the $\mathbf{C}P^1$ was 
obtained by [Hinkkanen 1994] \cite{Hik1}.  Formula (\ref{eq1.4}) can be viewed a refinement of Hinkkanen's formula 
to the Latt\`es maps.  
\end{remark}
The article is organized as follows. 
The preliminary facts can be found in Section 2.
Theorem \ref{thm1.1} and corollaries \ref{cor1.2}-\ref{cor1.3}   are proved in Section 3. 
An example is considered in Section 4.

\section{Preliminaries}
We briefly review the Latt\`es maps, symbolic dynamics,
non-commutative tori and Cuntz-Krieger algebras. 
The reader is referred to   [Silverman 2007] \cite[Section 6.4]{S}, 
  [Lind \& Marcus 1995] \cite{LM},
 [Cuntz \& Krieger 1980] \cite{CuKr1}
and \cite[Section 1]{N} for a detailed exposition.

\subsection{Latt\`es maps}
Denote by  $\mathscr{E}$ an  elliptic curve, i.e.   the subset of the complex projective plane of the form
$\{(x,y,z)\in \mathbf{C}P^2 ~|~ y^2z=x^3+ax^2z+bxz^2+cz^3\}$,
where $a, b$ and $c$  are some constant complex numbers.
\begin{definition}
A rational map $\phi: \mathbf{C}P^1\to  \mathbf{C}P^1$ of degree $d\ge 2$ is called 
a Latt\`es map if there are an elliptic curve $\mathscr{E}$, a morphism 
$\psi: \mathscr{E}\to \mathscr{E}$, and a finite covering $\pi: \mathscr{E} \to \mathbf{C}P^1$
such that the diagram in Figure 1 is commutative. 
\end{definition}
\begin{figure}
\begin{picture}(300,110)(-80,-5)
\put(20,70){\vector(0,-1){35}}
\put(123,70){\vector(0,-1){35}}
\put(45,23){\vector(1,0){60}}
\put(45,83){\vector(1,0){60}}
\put(15,20){$\mathbf{C}P^1$}
\put(115,20){$\mathbf{C}P^1$}
\put(17,80){$ \mathscr{E}$}
\put(115,80){ $ \mathscr{E}$}
\put(70,30){$\phi$}
\put(70,90){$\psi$}
\put(7,50){$\pi$}
\put(132,50){$\pi$}
\end{picture}
\caption{Latt\`es map}
\end{figure}
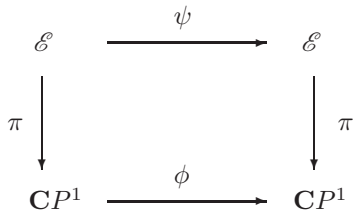

Recall that the M\"obius transformation is a map 
 $\mathbf{C}P^1\to  \mathbf{C}P^1$
of the form: 
\begin{equation}\label{eq2.1}
z\mapsto{az+b\over cz+d}, \quad ad-bc\ne 0, \quad a,b,c,d\in \mathbf{C}.  
\end{equation}
Such a transformation defines an automorphism of  $\mathbf{C}P^1$
and each automorphism of  $\mathbf{C}P^1$ has the form (\ref{eq2.1}). 
The M\"obius transformations can be represented by matrices 
$\left(\begin{smallmatrix} a & b\cr c & d\end{smallmatrix}\right)$ 
and they make the matrix group $GL_2(\mathbf{C})$ under the composition 
operation. Two matrices define the same M\"obius transformation 
if and only if they are scalar multiples of one another. Thus
the automorphism group of  $\mathbf{C}P^1$ is given by the formula:
\begin{equation}\label{eq2.2}
Aut ~\mathbf{C}P^1\cong GL_2(\mathbf{C})/\mathbf{C}^* \cong PGL_2(\mathbf{C}). 
\end{equation}
\begin{definition}
Two rational maps $\phi$ and $\phi'$ of $\mathbf{C}P^1$ are said to be 
$PGL_2(\mathbf{C})$-conjugate if for some $f\in  PGL_2(\mathbf{C})$ is holds 
\begin{equation}\label{eq2.3}
\phi'=f^{-1}\circ\phi\circ f. 
\end{equation}
\end{definition}
\begin{remark}
In view of (\ref{eq2.2}),  the $PGL_2(\mathbf{C})$-conjugate rational maps 
correspond to  the same map of   $\mathbf{C}P^1$ up to a coordinate change. 
Clearly, the iterations of such maps generate the same dynamical systems.    
\end{remark}

Let $\mathbf{Q}\subseteq K\subset\mathbf{C}$ be an algebraic number field, i.e. a finite degree extension of the  field $\mathbf{Q}$. 
The following result is proved in  [Silverman 2007] \cite[Theorem 6.46]{S}. 
\begin{theorem}\label{thm2.4}
Let $\phi$  and $\phi'$  be Latt\`es maps defined over $K$  that are associated, respectively,
to elliptic curves $\mathscr{E}$ and $\mathscr{E}'$.  Assume  that the projection maps $\pi$  
and $\pi'$  associated to $\phi$  and $\phi'$  both have degree $2$. 
If $\phi$  and $\phi'$ are  $PGL_2 (K)$-conjugate to
one another, then $\mathscr{E}$ and $\mathscr{E}'$ are isomorphic.
\end{theorem}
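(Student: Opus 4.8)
The plan is to recover the elliptic curve $\mathscr{E}$, up to isomorphism, from the $PGL_2(K)$-conjugacy class of the Latt\`es map $\phi$ alone, by exhibiting an intrinsic four-point subset of $\mathbf{C}P^1$ attached to $\phi$ whose projective cross-ratio computes the $j$-invariant of $\mathscr{E}$. Since two elliptic curves over $\overline{K}$ are isomorphic exactly when they have the same $j$-invariant, the theorem follows once $j(\mathscr{E})=j(\mathscr{E}')$ is established from the relation $\phi'=f^{-1}\circ\phi\circ f$.

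First I would normalize the degree-$2$ situation. A degree-$2$ covering $\pi\colon\mathscr{E}\to\mathbf{C}P^1$ identifies $\mathbf{C}P^1$ with the quotient $\mathscr{E}/\langle\iota\rangle$ by the deck involution, and after translating the origin we may take $\iota=[-1]$ and $\pi$ to be the Weierstrass $x$-coordinate; the four ramification points of $\pi$ are then the points of $\mathscr{E}[2]$, and their images form a set $B=\{b_1,b_2,b_3,b_4\}\subset\mathbf{C}P^1$ of four distinct points (distinct because $\mathscr{E}$ is smooth). The requirement that the morphism $\psi$ of Figure 1 descend through $\pi$ to $\phi$ forces $\psi\iota=\iota\psi$; writing $\psi=\tau_\xi\circ\alpha$ with $\alpha$ an isogeny and $\tau_\xi$ translation by $\xi$, this is precisely where the hypothesis $\deg\pi=2$ enters, and it pins $\xi$ to $\mathscr{E}[2]$. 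In particular $\psi$ maps $\mathscr{E}[2]$ into itself, so $\phi(B)\subseteq B$.

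Next I would show that $B$ is determined by $\phi$ as a rational self-map of $\mathbf{C}P^1$. Applying Riemann--Hurwitz to the two factorizations of $\mathscr{E}\to\mathbf{C}P^1$ in Figure 1, and using that the isogeny part $\alpha$ of $\psi$ is unramified, one computes that $\phi$ has only simple critical points, that $\phi^{-1}(B)=B\sqcup\operatorname{crit}(\phi)$, and that $\phi$ carries every critical point into $B$; hence the postcritical set of $\phi$ is contained in $B$, and (possibly after one further application of $\phi$) equals $B$. Thus $B$ is intrinsic, e.g.\ as the unique minimal $\phi$-forward-invariant four-point set whose full $\phi$-preimage is itself together with $\operatorname{crit}(\phi)$. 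Consequently, if $\phi'=f^{-1}\circ\phi\circ f$ with $f\in PGL_2(K)$, then $f$ carries the analogous four-point set $B'$ of $\phi'$ onto $B$. Since a M\"obius map preserves cross-ratios, $B$ and $B'$ have the same cross-ratio up to the $S_3$-action coming from reordering the four points; equivalently, the Legendre parameters $\lambda,\lambda'$ of $\mathscr{E},\mathscr{E}'$ lie in one $S_3$-orbit. As $j=256(\lambda^2-\lambda+1)^3/\bigl(\lambda^2(\lambda-1)^2\bigr)$ is invariant under that action, $j(\mathscr{E})=j(\mathscr{E}')$, whence $\mathscr{E}\cong\mathscr{E}'$.

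The main obstacle is the step showing that the \emph{entire} branch locus $B$ — all four points, not merely a three-point subset — is recoverable from $\phi$: for special isogenies $\alpha$ the set of critical values $\phi(\operatorname{crit}(\phi))$ can be a proper subset of $B$, and one must invoke forward invariance (equivalently, the canonical orbifold of $\phi$, which carries cone points of order $2$ exactly over $B$) to recover the missing point, which is where the bookkeeping is most delicate. A lesser point is the field of definition: the argument yields an isomorphism over $\overline{K}$, and upgrading it to an isomorphism defined over $K$ would require additionally matching the $K$-models and tracking quadratic twists, using that $f\in PGL_2(K)$ rather than merely $PGL_2(\mathbf{C})$.
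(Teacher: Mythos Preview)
The paper does not prove Theorem~\ref{thm2.4}; it is quoted from Silverman's book \cite[Theorem 6.46]{S} and used as a black box. So there is no ``paper's own proof'' to compare against. That said, your outline is essentially the proof Silverman gives: one shows that the postcritical set of a flexible Latt\`es map with $\deg\pi=2$ is exactly the four-point branch locus $B$ of $\pi$, hence is a $PGL_2$-invariant of $\phi$, and then recovers $j(\mathscr{E})$ from the cross-ratio of $B$.

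Your Riemann--Hurwitz bookkeeping is correct, and you have put your finger on the only genuinely delicate point, namely that the set of critical \emph{values} of $\phi$ can miss a point of $B$ (as happens for $\psi=[2]$, where $\pi(O)$ is not a critical value), so one must take the full forward orbit to recover all four points. Your remark that the orbifold structure of $\phi$ encodes $B$ directly is the cleanest way to phrase this and is exactly how Milnor and Silverman organize the argument. Your caveat about the field of definition is also apt: Silverman's theorem concludes $\mathscr{E}\cong\mathscr{E}'$ over $\overline{K}$, and the paper's unqualified ``isomorphic'' should be read that way.
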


\subsection{Subshifts of finite type}
A full Bernoulli $n$-shift is the set $X_n$ of bi-infinite sequences
$x=\{x_k\}$, where $x_k$ is a symbol taken from a set $S$ of cardinality $n$.  
The set $X_n$ is endowed with the product topology, making $X_n$ a Cantor set.
The shift homeomorphism $\sigma_n:X_n\to X_n$ is given by the formula
$\sigma_n(\dots x_{k-1}x_k x_{k+1}\dots)=(\dots x_{k}x_{k+1}x_{k+2}\dots)$
The homeomorphism defines a dynamical system $\{X_n,\sigma_n\}$
given by the iterations of $\sigma_n$. 

Let $A$ be an $n\times n$ matrix, whose entries $a_{ij}:= a(i,j)$ are $0$ or $1$.
Consider a subset $X_A\subseteq X_n$ consisting of the bi-infinite sequences,
which satisfy the restriction $a(x_k, x_{k+1})=1$ for all $-\infty<k<\infty$.
(It takes a moment to verify that $X_A$ is indeed a subset of $X_n$ and 
$X_A=X_n$, if and only if,  all the entries of $A$ are $1$'s.) By definition,
$\sigma_A=\sigma_n~|~X_A$ and the pair $\{X_A,\sigma_A\}$ is called a 
subshift of finite type (SFT).
A standard edge shift construction described in [Lind \& Marcus 1995]  \cite{LM}
allows to extend the notion of SFT to any matrix $A$ with the non-negative entries.

It is well known that the SFT's $\{X_A,\sigma_A\}$ and $\{X_B,\sigma_B\}$
are topologically conjugate (as the dynamical systems), if and only if, 
the matrices $A$ and $B$ are  strong shift equivalent (SSE), see 
[Lind \& Marcus 1995]  \cite{LM} for 
the corresponding definition. The SSE of two matrices is a difficult
algorithmic problem, which motivates the consideration of a weaker 
equivalence between the matrices called a  shift equivalence (SE). 
Namely,  the matrices $A$ and $B$ are said to be shift equivalent
(over $\mathbf{Z}^+$),  when there exist non-negative matrices $R$ and $S$
and a positive integer $k$ (a lag), satisfying the equations 
$AR=RB, BS=SA, A^k=RS$ and $SR=B^k$. Finally, the SFT's    
 $\{X_A,\sigma_A\}$ and $\{X_B,\sigma_B\}$ (and the matrices $A$ and $B$)
are said to be flow equivalent (FE),  if the suspension flows of the SFT's
act on the topological spaces, which are homeomorphic under a homeomorphism
that respects the orientation of the orbits of the suspension flow. 
We shall use the following implications
\begin{equation}
SSE\Rightarrow  SE\Rightarrow  FE.
\end{equation}

\subsection{$C^*$-agebras}
The $C^*$-algebra is an algebra  $\mathscr{A}$ over $\mathbf{C}$ with a norm 
$a\mapsto ||a||$ and an involution $\{a\mapsto a^* ~|~ a\in \mathscr{A}\}$  such that $\mathscr{A}$ is
complete with  respect to the norm, and such that $||ab||\le ||a||~||b||$ and $||a^*a||=||a||^2$ for every  $a,b\in \mathscr{A}$.  
Each commutative $C^*$-algebra is  isomorphic
to the algebra $C_0(X)$ of continuous complex-valued
functions on some locally compact Hausdorff space $X$. 
Any other  algebra $\mathscr{A}$ can be thought of as  a noncommutative  
topological space.

\subsubsection{AF-algebras}
An  AF-algebra  (approximately finite $C^*$-algebra) is defined to
be the  norm closure of an ascending sequence of the finite dimensional
$C^*$-algebras $M_n$'s, where  $M_n$ is the $C^*$-algebra of the $n\times n$ matrices
with the entries in $\mathbf{C}$. Here the index $n=(n_1,\dots,n_k)$ represents
a semi-simple matrix algebra $M_n=M_{n_1}\oplus\dots\oplus M_{n_k}$.
The ascending sequence can be written as 
$M_1\buildrel\rm\varphi_1\over\longrightarrow M_2
   \buildrel\rm\varphi_2\over\longrightarrow\dots,
$
where $M_i$ are the finite dimensional $C^*$-algebras and
$\varphi_i$ the homomorphisms between such algebras. 
(It is easy to see, that
each $\varphi_i$ is given by an integer matrix $A_i$ with non-negative entries.)
 The set-theoretic limit
$\mathbb{A}=\lim M_n$ has a natural algebraic structure given by the formula
$a_m+b_k\to a+b$; here $a_m\to a,b_k\to b$ for the
sequences $a_m\in M_m,b_k\in M_k$.

If the homomorphisms $\varphi_1 =\varphi_2=\dots=Const$ then
 the AF-algebra $\mathbb{A}$ is called  stationary. 
In this case, the AF-algebra is given by a single matrix $A$ and by 
taking a power of $A$ one gets a strictly positive integer matrix,   which 
we always assume to be the case.  The stationary AF-algebra  $\mathbb{A}$
is therefore the limit 
$M_1\buildrel\rm A\over\longrightarrow M_2
   \buildrel\rm A\over\longrightarrow\dots,
$
and shifting the sequence by one generates a map $\sigma: \mathbb{A}\to \mathbb{A}$
called a shift endomorphism of $\mathbb{A}$.

\subsubsection{Dimension groups}
By $M_{\infty}(\mathscr{A})$ 
one understands the algebraic direct limit of the $C^*$-algebras 
$M_n(\mathscr{A})$ under the embeddings $a\mapsto ~\mathbf{diag} (a,0)$. 
The direct limit $M_{\infty}(\mathscr{A})$  can be thought of as the $C^*$-algebra 
of infinite-dimensional matrices whose entries are all zero except for a finite number of the
non-zero entries taken from the $C^*$-algebra $\mathscr{A}$.
Two projections $p,q\in M_{\infty}(\mathscr{A})$ are equivalent, if there exists 
an element $v\in M_{\infty}(\mathscr{A})$,  such that $p=v^*v$ and $q=vv^*$. 
The equivalence class of projection $p$ is denoted by $[p]$.   
We write $V(\mathscr{A})$ to denote all equivalence classes of 
projections in the $C^*$-algebra $M_{\infty}(\mathscr{A})$, i.e.
$V(\mathscr{A}):=\{[p] ~:~ p=p^*=p^2\in M_{\infty}(\mathscr{A})\}$. 
The set $V(\mathscr{A})$ has the natural structure of an abelian 
semi-group with the addition operation defined by the formula 
$[p]+[q]:=\mathbf{diag}(p,q)=[p'\oplus q']$, where $p'\sim p, ~q'\sim q$ 
and $p'\perp q'$.  The identity of the semi-group $V(\mathscr{A})$ 
is given by $[0]$, where $0$ is the zero projection. 
By the $K_0$-group $K_0(\mathscr{A})$ of the unital $C^*$-algebra $\mathscr{A}$
one understands the Grothendieck group of the abelian semi-group
$V(\mathscr{A})$, i.e. a completion of $V(\mathscr{A})$ by the formal elements
$[p]-[q]$.  The image of $V(\mathscr{A})$ in  $K_0(\mathscr{A})$ 
is a positive cone $K_0^+(\mathscr{A})$ defining  the order structure $\le$  on the  
abelian group  $K_0(\mathscr{A})$.

The pair   $\left(K_0(\mathscr{A}),  K_0^+(\mathscr{A})\right)$
is known as a dimension group of the $C^*$-algebra $\mathscr{A}$. 
The scale $\Sigma(\mathscr{A})$ is the image in $K_0^+(\mathscr{A})$
of the equivalence classes of projections in the $C^*$-algebra $\mathscr{A}$. 
 Each  scale  can always be written as 
$\Sigma(\mathscr{A})=\{a\in K_0^+(\mathscr{A}) ~|~0\le a\le u\}$,
where $u$ is an  order unit of  $K_0^+(\mathscr{A})$.  
The pair  $\left(K_0(\mathscr{A}),  K_0^+(\mathscr{A})\right)$ and the
triple  $\left(K_0(\mathscr{A}),  K_0^+(\mathscr{A}), \Sigma(\mathscr{A})\right)$
are invariants of the Morita equivalence and isomorphism class of the 
$C^*$-algebra $\mathscr{A}$, respectively.

If  $\mathbb{A}$ is an AF-algebra, then its scaled dimension group 
(dimension group, resp.) is a complete invariant of the isomorphism 
(Morita equivalence, resp.) class of $\mathbb{A}$, see e.g. \cite[Theorem 3.5.2]{N}.

\subsubsection{Non-commutative tori}
By a non-commutative torus one understands the universal $C^*$-algebra 
$\mathscr{A}_{\theta}$ generated by unitaries $u$ and $v$ satisfying 
the commutation relation $vu=e^{2\pi i\theta}uv$ for a real constant $\theta$. 
The $\mathscr{A}_{\theta}$ is said to have real 
multiplication if $\theta\in \mathbf{Q}(\sqrt{D})$ is a quadratic irrationality.

The non-commutative tori and elliptic curves are related by a fundamental correspondence
saying that there exists a covariant functor $\mathscr{F}: \{\mathscr{E}\}\to \{\mathscr{A}_{\theta}\}$ 
which maps the morphisms  between elliptic curves to the endomorphisms of  non-commutative
tori   \cite[Section 1.3]{N}.

\subsubsection{Effros-Shen algebras}
The Effros-Shen algebra $\mathbb{A}_{\theta}$ is an AF-algebra 
given by the matrices 
\begin{equation}
A_i=\prod_{j=0}^i\left(\begin{matrix} a_j & 1\cr 1 & 0
\end{matrix}\right),
\end{equation}
where $\theta=[a_0, a_1, \dots]$
 is the  continued fraction of a real number $\theta$. 
The Pimsner-Voiculescu Theorem says that  $\mathscr{A}_{\theta}$
is a dense subalgebra of the Effros-Shen algebra $\mathbb{A}_{\theta}$
under an embedding
\begin{equation}\label{eq2.5}
\mathscr{A}_{\theta}\hookrightarrow \mathbb{A}_{\theta}. 
\end{equation}

\subsubsection{Cuntz-Krieger algebras}
A  Cuntz-Krieger algebra, $\mathcal{O}_A$, is the $C^*$-algebra
generated by partial isometries $s_1,\dots, s_n$ that act on a Hilbert space in such 
a way that their support projections $Q_i=s_i^*s_i$ and their  range projections
$P_i=s_is_i^*$ are orthogonal and satisfy the relations
$Q_i=\sum_{j=i}^nb_{ij}P_j$,
for an $n\times n$  matrix $A=(a_{ij})$ consisting of $0$'s 
and $1$'s [Cuntz  \&  Krieger 1980]  \cite{CuKr1}.  The notion is extendable to the 
matrices $A$ with the non-negative integer entries 
[Cuntz  \&  Krieger 1980]  \cite[Remark 2.16]{CuKr1}.
It is known,  that the $C^*$-algebra $\mathcal{O}_A$ is simple, 
whenever matrix $A$ is irreducible (i.e. a certain power
of $A$ is a strictly positive integer matrix). 
It was established in [Cuntz  \&  Krieger 1980]  \cite{CuKr1},  that $K_0(\mathcal{O}_A)\cong \mathbf{Z}^n / (I-A^t)\mathbf{Z}^n$
and $K_1(\mathcal{O}_A)=Ker~(I-A^t)$, where $A^t$ is a transpose of the matrix $A$.
It is not difficult to see, that whenever $det~(I-A^t)\ne 0$, the $K_0(\mathcal{O}_A)$
is a finite abelian group and $K_1(\mathcal{O}_A)=0$.  The both groups are invariants of the
stable isomorphism class of the Cuntz-Krieger algebra.

Let  $\mathbb{A}_{\theta}$ be  the stationary Effros-Shen algebra given by a matrix $A$
and let $\sigma$ be the a shift endomorphism of $\mathbb{A}_{\theta}$. Denote by  $\mathscr{K}$  
the $C^*$-algebra of compact operators. 
The Cuntz-Krieger algebra  can be written as the crossed product $C^*$-algebra:
\begin{equation}\label{eq2.6}
\mathcal{O}_A\otimes\mathscr{K}\cong \mathbb{A}_{\theta}\rtimes_{\sigma}\mathbf{Z}.
\end{equation} 

\section{Proof}
\subsection{Proof of theorem \ref{thm1.1} }
We shall split the proof in two lemmas.
\begin{lemma}\label{lm3.1}
Let $\mathbb{A}_{RM}$ be the Effros-Shen algebra containing a
non-commutative torus $\mathscr{A}_{RM}$ given by  the embedding (\ref{eq2.5}). 
For an endomorphism $\epsilon\in End~\mathbb{A}_{RM}$ there
exists a stationary AF-algebra $\mathbb{A}\subseteq\mathbb{A}_{RM}$ given by 
a  matrix $T\in GL_2(\mathbf{Z})$,
such that: 
\begin{equation}\label{eq3.1}
\mathbb{A}_{RM}\rtimes_{\epsilon}\mathbf{Z}\cong 
\mathbb{A}\rtimes_{\sigma}\mathbf{Z}. 
\end{equation}
where $\sigma$ is  the shift automorphism of $\mathbb{A}$.
 \end{lemma}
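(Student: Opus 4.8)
The plan is to descend to $K$-theory and use the Elliott classification of AF algebras together with the Pimsner--Voiculescu machinery for crossed products. Since $\theta$ is a quadratic irrationality in $\mathbf{Q}(\sqrt{D})$, its continued fraction is eventually periodic, so $\mathbb{A}_{RM}$ is the stationary Effros--Shen algebra attached to a matrix $M\in GL_2(\mathbf{Z})$ --- the product of the matrices $\left(\begin{smallmatrix} a_j & 1\\ 1 & 0\end{smallmatrix}\right)$ over one period --- and, replacing $M$ by a power, we may assume it has strictly positive entries. Because $\det M=\pm1$, the inductive system $\mathbf{Z}^2\xrightarrow{M}\mathbf{Z}^2\xrightarrow{M}\cdots$ stabilizes, so $K_0(\mathbb{A}_{RM})\cong\mathbf{Z}^2$ as an abelian group, the positive cone is $K_0^+(\mathbb{A}_{RM})=\{v\in\mathbf{Z}^2 : \langle v,w\rangle>0\}\cup\{0\}$ with $w$ the Perron--Frobenius eigenvector of $M^{t}$, and the slope of $w$ lies in $\mathbf{Q}(\sqrt{D})$. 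By \cite[Theorem 3.5.2]{N} the scaled dimension group is a complete invariant of $\mathbb{A}_{RM}$.

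Next I would read off the action of $\epsilon$ on $K_0$. As $\mathbb{A}_{RM}$ is simple, a nonzero $\epsilon\in\mathrm{End}\,\mathbb{A}_{RM}$ is injective, and by functoriality of $K_0$ it induces an injective positive endomorphism $B:=\epsilon_{*}$ of $(\mathbf{Z}^2,K_0^+)$; thus $B\in M_2(\mathbf{Z})$, $\det B\ne0$, and $B^{t}w=\mu w$ for some $\mu>0$. Hence $w$ is a common eigenvector of $M^{t}$ and $B^{t}$, and applying the nontrivial Galois automorphism of $\mathbf{Q}(\sqrt{D})$ shows that the conjugate direction $\bar w$ is the second common eigenvector; therefore $M$ and $B$ commute, and $\mathbf{Z}[B]\subseteq M_2(\mathbf{Z})$ is an order $\mathcal{O}$ in $\mathbf{Q}(\sqrt{D})$, under which $B$ corresponds to an element $\beta\in\mathcal{O}$ with $N(\beta)=\det B$ and $\mathrm{Tr}(\beta)=\mathrm{tr}\,B$. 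Conversely, by Elliott's theorem every positive endomorphism of the dimension group is induced (up to approximate unitary equivalence) by an endomorphism of $\mathbb{A}_{RM}$, so the pair $(\mathbb{A}_{RM},\epsilon)$ is determined, up to stable isomorphism, by the triple $(\mathbf{Z}^2,K_0^+,B)$.

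I would then construct the stationary subalgebra. Working inside $\mathcal{O}$ (or inside some order of $\mathbf{Q}(\sqrt{D})$), I would choose a matrix $T\in GL_2(\mathbf{Z})$ with non-negative, primitive entries whose Perron eigenvalue lies in $\mathbf{Q}(\sqrt{D})$ and such that $\mathrm{coker}(I-T)\cong\mathrm{coker}(I-B)$ and $\ker(I-T)\cong\ker(I-B)$; here one uses that a hyperbolic element of $SL_2(\mathbf{Z})$ coming from a totally positive unit of $\mathcal{O}$ is $GL_2(\mathbf{Z})$-conjugate to a product of matrices $\left(\begin{smallmatrix} a_i & 1\\ 1 & 0\end{smallmatrix}\right)$ with $a_i\ge1$. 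Let $\mathbb{A}$ be the stationary AF algebra of $T$. Since the Perron slopes of $T$ and of $M$ are irrational elements of the same field $\mathbf{Q}(\sqrt{D})$, they lie in one $GL_2(\mathbf{Q})$-orbit, so there is an injective positive homomorphism of dimension groups $(\mathbf{Z}^2,K_0^+(\mathbb{A}))\hookrightarrow(\mathbf{Z}^2,K_0^+(\mathbb{A}_{RM}))$, which by \cite[Theorem 3.5.2]{N} lifts to an embedding $\mathbb{A}\subseteq\mathbb{A}_{RM}$. The shift automorphism $\sigma$ of $\mathbb{A}$ acts on $K_0(\mathbb{A})\cong\mathbf{Z}^2$ as multiplication by $T$; hence, by the Pimsner--Voiculescu exact sequence for $\mathbb{A}\rtimes_{\sigma}\mathbf{Z}$ and its analogue for the crossed product by the injective endomorphism $\epsilon$ (the dilation construction of \cite[Sections 3.5 and 3.7]{N}, cf.\ (\ref{eq2.6})), both $\mathbb{A}\rtimes_{\sigma}\mathbf{Z}$ and $\mathbb{A}_{RM}\rtimes_{\epsilon}\mathbf{Z}$ are stably isomorphic to purely infinite, simple, nuclear $C^*$-algebras with $K_0\cong\mathrm{coker}(I-T)\cong\mathrm{coker}(I-B)$ and $K_1\cong\ker(I-T)\cong\ker(I-B)$; the Kirchberg--Phillips classification then yields the isomorphism (\ref{eq3.1}).

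The main obstacle is the construction of $T$: it must lie in $GL_2(\mathbf{Z})$ with non-negative primitive entries (so that $\mathbb{A}$ is a genuine stationary AF algebra carrying an honest shift automorphism) and have Perron eigenvalue in $\mathbf{Q}(\sqrt{D})$ (so that $\mathbb{A}$ embeds into $\mathbb{A}_{RM}$), while still realising the prescribed invariant $\mathrm{coker}(I-B)$. I expect this step to rely on the arithmetic of the order $\mathcal{O}$ --- the infinitude of its unit group (Dirichlet) and the extra rigidity of the endomorphisms $\epsilon$ that actually arise from Latt\`es maps --- together with a flow-equivalence reduction, so that (via the implications $SSE\Rightarrow SE\Rightarrow FE$) only the Bowen--Franks datum $\mathrm{coker}(I-B)$ with the sign of the determinant, rather than $B$ itself, has to be matched.
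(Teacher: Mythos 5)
Your reduction to $K$-theory is sound as far as it goes --- the identification of $\epsilon_*$ with multiplication by an element $\beta$ of an order in $\mathbf{Q}(\sqrt{D})$, via the common-eigenvector/Galois-conjugate argument, recovers steps (i)--(iv) of the paper's proof --- but the proposal stalls exactly where the content of the lemma lies. You yourself concede that the construction of the unimodular matrix $T$ is ``the main obstacle'' and only \emph{expect} it to follow from the arithmetic of the order together with a flow-equivalence reduction; that step is never carried out, so the existence claim of the lemma is not established. The paper closes this gap by a specific device you do not invoke: the Unimodular Conjecture for dimension groups (known to hold for the Effros--Shen algebras, per the cited work of Effros), which guarantees that the stationary dimension group $\mathbf{Z}^2\xrightarrow{A}\mathbf{Z}^2\xrightarrow{A}\cdots$ built from the (generally non-unimodular, $\det A=\pm N(\epsilon)$) matrix of $\epsilon$ is order-isomorphic to one given by a matrix $T\in GL_2(\mathbf{Z})$; moreover $T$ is produced explicitly as the product $\left(\begin{smallmatrix}a_1&1\cr 1&0\end{smallmatrix}\right)\cdots\left(\begin{smallmatrix}a_k&1\cr 1&0\end{smallmatrix}\right)$ over the period of the continued fraction of $\epsilon$, the two inductive limits differing only in finitely many initial terms. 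Nothing in your proposal plays the role of this step.

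A second problem is the endgame. You propose to match only the Bowen--Franks data $\mathrm{coker}(I-B)$ and $\ker(I-B)$ and then invoke Kirchberg--Phillips. For that you must first prove that $\mathbb{A}_{RM}\rtimes_{\epsilon}\mathbf{Z}$ --- a crossed product by a proper \emph{endomorphism}, hence some Stacey/Exel/Cuntz--Pimsner-type construction --- is purely infinite, simple, nuclear and in the UCT class; you assert this but give no argument, and it is not automatic. Even granting it, classification by $K$-theory yields only stable isomorphism and requires matching the class of the unit to upgrade, whereas the paper obtains (\ref{eq3.1}) more directly by exhibiting an isomorphism of the dimension-group dynamical systems underlying the two crossed products (the pair $(\Lambda_A,\Lambda_A^+)$ together with its shift), from which the isomorphism of crossed products follows via (\ref{eq2.6}) without any appeal to the classification of purely infinite algebras. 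In short: the arithmetic analysis of $\epsilon_*$ is correct and parallel to the paper, but the two decisive steps --- producing $T$ and passing from $K$-theoretic data back to the $C^*$-isomorphism --- are respectively missing and under-justified.
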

\begin{proof}
Roughly speaking, the isomorphism (\ref{eq3.1}) follows from the 
Unimodular Conjecture for the dimension groups [Effros 1981] \cite[p.34]{E}. Such a conjecture 
is known to be true for the dimension groups associated to the  Effros-Shen algebras.  
We refer the reader to [Effros 1981] \cite{E} for the notations  and details.
Let us proceed step by step.   

\bigskip
(i) Let $\Lambda=\mathbf{Z}+\mathbf{Z}\theta\subset\mathbf{R}$ and $\Lambda^+=\{\Lambda ~|~ \mathbf{Z}+\mathbf{Z}\theta>0\}$.
Denote by $(\Lambda, \Lambda^+)$ a dimension group associated to the Effros-Shen algebra  $\mathbb{A}_{RM}$. 
Since our Effros-Shen algebra has real multiplication, we conclude that $\Lambda\subset k$, where $k=\mathbf{Q}(\sqrt{D})$ is a real quadratic 
number field. 

\bigskip
(ii) Let $O_k$ be the ring of integers of the field $k$. 
The multiplication of  $\Lambda$ by an element of $O_k$
generates  an endomorphism of $\Lambda$. 
It is not hard to see,  that 
 $End~\Lambda\cong O_k$ or to an order in the ring $O_k$.

\bigskip
(iii) Since each $\epsilon:  \mathbb{A}_{RM}\to \mathbb{A}_{RM}$ induces 
 an endomorphism of $\Lambda$, we have 
$End ~\mathbb{A}_{RM}\cong End ~\Lambda$.

\bigskip
(iv) Let us  determine matrix $T\in GL_2(\mathbf{Z})$ in lemma \ref{lm3.1}. 
Denote by  $A\in M_2(\mathbf{Z})$  the matrix form of the algebraic integer  $\epsilon\in O_k$
under the isomorphism 
\linebreak
 $End ~\mathbb{A}_{RM}\cong O_k$. 
Consider a dimension group given by the inductive limit:
\begin{equation}\label{eq3.2}
\mathbf{Z}^2\buildrel\rm A\over\longrightarrow \mathbf{Z}^2
   \buildrel\rm A\over\longrightarrow\dots
\end{equation}

\bigskip
(v) The dimension group (\ref{eq3.2}) 
defines a stationary  AF-algebra $\mathbb{A}\subseteq\mathbb{A}_{RM}$
corresponding to the dimension group $(\Lambda_A, \Lambda_A^+)
\subseteq (\Lambda, \Lambda^+)$ invariant under iterations of the endomorphism $A$.  
The index of $\Lambda_A$ in $\Lambda$ is equal to the degree $|det~A|$ of the endomorphism
$\epsilon$.

\bigskip
(vi) Since the Unimodular Conjecture is true for the Effros-Shen algebras   
[Effros 1981] \cite[p.34]{E}, we conclude that inductive limit (\ref{eq3.2}) 
can be written as: 
\begin{equation}\label{eq3.3}
\mathbf{Z}^2\buildrel\rm T\over\longrightarrow \mathbf{Z}^2
   \buildrel\rm T\over\longrightarrow\dots, 
\end{equation}
where $T\in GL_2(\mathbf{Z})$.  

\bigskip
(vii) Our matrix $T$ can be obtained from the continued fraction of $\epsilon\in O_k$
as follows.  
Since $\epsilon$ is a quadratic irrationality, the corresponding continued fraction must
be eventually $k$-periodic for some $k\ge 1$, i.e.  
$\epsilon=[b_1,\dots, b_N;  ~\overline{a_1,\dots,a_k}]$.
Consider the following inductive limit:
\begin{equation}\label{eq3.4}
\mathbf{Z}^2\buildrel\rm 
\left(
\begin{smallmatrix}
b_1 & 1\cr 1 & 0
\end{smallmatrix}
\right) 
\over\longrightarrow 
\dots
\buildrel\rm 
\left(
\begin{smallmatrix}
b_N & 1\cr 1 & 0
\end{smallmatrix}
\right) 
\over\longrightarrow 
\mathbf{Z}^2
   \buildrel\rm A\over\longrightarrow\dots, 
\end{equation}
where
\begin{equation}\label{eq3.5}
T=
\left(
\begin{matrix}
a_1 & 1\cr 1 & 0
\end{matrix}
\right)
\dots
 \left(
\begin{matrix}
a_k & 1\cr 1 & 0
\end{matrix}
\right). 
\end{equation}

\bigskip
(viii) 
Since  (\ref{eq3.3}) and (\ref{eq3.4}) differ only in a finite number of terms, 
we conclude that the corresponding inductive limits define the same dimension 
group $(\Lambda_A, \Lambda_A^+) \subseteq (\Lambda, \Lambda^+)$. 
Thus  $T$ in (\ref{eq3.3}) is given by the formula (\ref{eq3.5}). 

\bigskip
(ix)  The isomorphism (\ref{eq3.1})  follows from  such 
between the dimension groups (\ref{eq3.2}) and (\ref{eq3.3}).

\bigskip
This argument finishes the proof of lemma \ref{lm3.1}. 
\end{proof}

\bigskip
\begin{corollary}\label{cor3.2}
$\mathbb{A}_{RM}\rtimes_{\epsilon}\mathbf{Z}\cong  \mathcal{O}_A\otimes\mathscr{K}$.
\end{corollary}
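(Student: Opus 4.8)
The plan is to combine the previous lemma with the crossed-product description of Cuntz-Krieger algebras recalled in Section 2.3.6.  By Lemma \ref{lm3.1}, the crossed product $\mathbb{A}_{RM}\rtimes_{\epsilon}\mathbf{Z}$ is isomorphic to $\mathbb{A}\rtimes_{\sigma}\mathbf{Z}$, where $\mathbb{A}$ is the stationary AF-algebra attached to the integer matrix $A\in M_2(\mathbf{Z})$ (the matrix form of $\epsilon\in O_k$, see step (iv) of that proof) and $\sigma$ is the shift automorphism.  So it suffices to identify $\mathbb{A}\rtimes_{\sigma}\mathbf{Z}$ with $\mathcal{O}_A\otimes\mathscr{K}$.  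This is precisely the content of formula (\ref{eq2.6}), applied to the stationary Effros-Shen algebra $\mathbb{A}=\mathbb{A}_{\theta}$ determined by the matrix $A$ — but one must first argue that $\mathbb{A}$ really is of Effros-Shen type, i.e. that $A$ can be taken of the continued-fraction shape, which is exactly what step (vii) and equation (\ref{eq3.5}) of Lemma \ref{lm3.1} provide: up to the passage to $T\in GL_2(\mathbf{Z})$ and a finite truncation of the inductive limit, the stationary algebra is given by a product of elementary matrices $\left(\begin{smallmatrix} a_j & 1\cr 1 & 0\end{smallmatrix}\right)$, hence is an Effros-Shen algebra.

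Concretely I would proceed in three short steps.  First, invoke Lemma \ref{lm3.1} to replace $\mathbb{A}_{RM}\rtimes_{\epsilon}\mathbf{Z}$ by $\mathbb{A}\rtimes_{\sigma}\mathbf{Z}$ with $\mathbb{A}$ the stationary AF-algebra of a matrix $T\in GL_2(\mathbf{Z})$ (equivalently, of the matrix $A$ itself, since the two generate the same dimension group by steps (viii)-(ix)).  Second, observe that $\mathbb{A}$ is the stationary Effros-Shen algebra $\mathbb{A}_{\theta}$ for the quadratic irrationality $\theta$ whose eventually periodic continued fraction has period word encoded by $T$ via (\ref{eq3.5}); this uses only the classical fact that a product of such elementary matrices is the transition matrix of a periodic continued fraction.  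Third, apply the crossed-product isomorphism (\ref{eq2.6}), $\mathbb{A}_{\theta}\rtimes_{\sigma}\mathbf{Z}\cong\mathcal{O}_A\otimes\mathscr{K}$, where the matrix $A$ in the Cuntz-Krieger algebra is the one governing $\mathbb{A}_{\theta}$ as a stationary AF-algebra.  Chaining the three isomorphisms gives $\mathbb{A}_{RM}\rtimes_{\epsilon}\mathbf{Z}\cong\mathcal{O}_A\otimes\mathscr{K}$, as claimed.

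The main obstacle is bookkeeping about \emph{which} matrix plays the role of ``$A$'' in each of the three appearances — the endomorphism matrix of $\epsilon$, the $GL_2(\mathbf{Z})$ matrix $T$ from the Unimodular Conjecture, and the defining matrix of the Cuntz-Krieger algebra $\mathcal{O}_A$ — and checking that these are compatible (the same similarity class, or differ only by the operations that do not change $\mathcal{O}_A\otimes\mathscr{K}$, namely $GL_2(\mathbf{Z})$-conjugacy and finite truncations of the inductive limit).  Steps (iv)-(ix) of Lemma \ref{lm3.1} already do the delicate part of this identification at the level of dimension groups, so here one mainly has to transport that identification through the functor ``stationary AF-algebra $\rightsquigarrow$ crossed product by the shift'' and note that this functor sends $GL_2(\mathbf{Z})$-conjugate matrices to stably isomorphic Cuntz-Krieger algebras.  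Everything else is a direct citation of (\ref{eq2.6}) and the Pimsner-Voiculescu embedding (\ref{eq2.5}), so the corollary is essentially a formal consequence of Lemma \ref{lm3.1} together with the material assembled in Section 2.3.
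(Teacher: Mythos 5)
Your proposal is correct and follows the same route as the paper, which proves the corollary simply by chaining the isomorphism (\ref{eq3.1}) of Lemma \ref{lm3.1} with the crossed-product description (\ref{eq2.6}) of the Cuntz-Krieger algebra. Your additional bookkeeping about which matrix ($A$ versus $T$) labels the Cuntz-Krieger algebra is a reasonable elaboration of a point the paper leaves implicit, but it does not change the argument.
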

\begin{proof}
The corollary follows from formulas (\ref{eq2.6}) and (\ref{eq3.1}). 
\end{proof}

\bigskip
\begin{lemma}\label{lm3.3}
If $\phi$ and $\phi'$ are the $PGL_2(K)$-conjugate 
Latt\`es maps, then the corresponding Cuntz-Krieger 
algebras $\mathcal{O}_A$ and $\mathcal{O}_{A'}$ 
are Morita equivalent. 
\end{lemma}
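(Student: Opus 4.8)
The plan is to trace the map $F$ through its three constituent arrows and show that each arrow sends the $PGL_2(K)$-conjugacy class of $\phi$ to a uniquely determined object, and that conjugate maps land in the same Morita equivalence class. First I would invoke Theorem \ref{thm2.4}: since $\phi$ and $\phi'$ are $PGL_2(K)$-conjugate Latt\`es maps whose projection maps $\pi,\pi'$ both have degree $2$, the associated elliptic curves $\mathscr{E}$ and $\mathscr{E}'$ are isomorphic. Because the Latt\`es condition $\pi\circ\phi=\psi\circ\pi$ determines $\psi$ up to the isomorphism of $\mathscr{E}$ (the projection being a quotient by the hyperelliptic involution, with $\psi$ the unique lift commuting with it), the dynamical system $(\mathscr{E}_{CM},\psi)$ attached to $\phi$ is isomorphic to the one attached to $\phi'$. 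Here I also use that a Latt\`es map with a degree $2$ projection forces $\mathscr{E}$ to have complex multiplication (the map $\psi$ being multiplication by an algebraic integer in an imaginary quadratic order), so Example \ref{exm1.2} applies.

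Next I would push this isomorphism forward under the functor $\mathscr{F}:\{\mathscr{E}\}\to\{\mathscr{A}_\theta\}$. By the fundamental correspondence, $\mathscr{F}$ carries isomorphic CM elliptic curves to isomorphic non-commutative tori with real multiplication, and carries the morphism $\psi:\mathscr{E}_{CM}\to\mathscr{E}_{CM}$ to an endomorphism $\epsilon=\mathscr{F}(\psi)\in End~\mathbb{A}_{RM}$; functoriality then guarantees that conjugate $\psi$ yield conjugate $\epsilon$, so the crossed products $\mathbb{A}_{RM}\rtimes_\epsilon\mathbf{Z}$ and $\mathbb{A}_{RM}\rtimes_{\epsilon'}\mathbf{Z}$ are isomorphic. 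Now I apply Corollary \ref{cor3.2} to both: $\mathbb{A}_{RM}\rtimes_\epsilon\mathbf{Z}\cong\mathcal{O}_A\otimes\mathscr{K}$ and $\mathbb{A}_{RM}\rtimes_{\epsilon'}\mathbf{Z}\cong\mathcal{O}_{A'}\otimes\mathscr{K}$. Combining the three isomorphisms gives $\mathcal{O}_A\otimes\mathscr{K}\cong\mathcal{O}_{A'}\otimes\mathscr{K}$, which is exactly the statement that $\mathcal{O}_A$ and $\mathcal{O}_{A'}$ are Morita equivalent, i.e. $A'=T^{-1}AT$ for some $T\in GL_2(\mathbf{Z})$ up to the identifications made in Lemma \ref{lm3.1}.

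The step I expect to be the main obstacle is the passage from \emph{conjugacy of $\phi$} to \emph{conjugacy of the endomorphism $\epsilon$ of $\mathbb{A}_{RM}$} in a way that is genuinely functorial: Theorem \ref{thm2.4} only gives that $\mathscr{E}\cong\mathscr{E}'$, but one must check that the particular self-morphisms $\psi,\psi'$ are intertwined by that isomorphism, not merely that abstract curves match. This requires knowing that a degree-$2$ Latt\`es map determines $\psi$ rigidly from $\phi$ (which follows from the uniqueness of the lift of $\phi$ through the hyperelliptic quotient commuting with $[-1]$), and that $\mathscr{F}$ respects this intertwining — i.e. that $\mathscr{F}$ is a functor on the level of the dynamical systems, not just on objects. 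Granting the functoriality of $\mathscr{F}$ as stated in Section 2.3 of the excerpt and the structural description of degree-$2$ Latt\`es maps in \cite[Section 6.4]{S}, the remaining bookkeeping — translating "isomorphic crossed products" into "similar matrices in $GL_2(\mathbf{Z})$" via Lemma \ref{lm3.1} and (\ref{eq2.6}) — is routine. This completes the proof of Lemma \ref{lm3.3}.
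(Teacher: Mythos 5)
Your proposal follows essentially the same route as the paper: Theorem \ref{thm2.4} gives an isomorphism of the underlying elliptic curves, this is pushed through $\mathscr{F}$ to an isomorphism of the Effros--Shen algebras, the crossed products are then identified, and Corollary \ref{cor3.2} converts this into stable isomorphism (Morita equivalence) of $\mathcal{O}_A$ and $\mathcal{O}_{A'}$. The one point where you go beyond the paper is in explicitly flagging and justifying that the curve isomorphism must intertwine the self-morphisms $\psi,\psi'$ (via rigidity of the lift through the degree-$2$ quotient) so that the endomorphisms $\epsilon,\epsilon'$ are genuinely conjugate --- a step the paper's item (iv) asserts without comment --- so your version is, if anything, the more complete argument.
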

\begin{proof}
In outline, the proof of lemma \ref{lm3.3} follows from Theorem \ref{thm2.4}
and corollary \ref{cor3.2}.  Let us pass to a detailed argument.

\bigskip
(i)  Let $\phi$ and $\phi'$ be  the $PGL_2(K)$-conjugate 
Latt\`es maps lying in the category $\mathscr{L}$. 
Since the covering maps $\pi$ and $\pi'$  both have degree $2$,
one can apply Theorem \ref{thm2.4}. 

\bigskip
(ii) It follows from \ref{thm2.4} that there exists an isomorphism $\psi:\mathscr{E}\to
\mathscr{E}'$ between  the corresponding elliptic curves.

\bigskip
(iii) The isomorphism $\psi$ induces an isomorphism of the non-commutative 
tori $\mathscr{A}_{RM}\to \mathscr{A}_{RM}'$.  The same is true for the 
Effros-Shen algebras    $\mathbb{A}_{RM}\to \mathbb{A}_{RM}'$
under the embedding (\ref{eq2.5}). 

\bigskip
(iv) We conclude from (iii) that the crossed product $C^*$-algebras
$\mathbb{A}_{RM}\rtimes_{\epsilon}\mathbf{Z}$ and 
$\mathbb{A}_{RM}'\rtimes_{\epsilon}\mathbf{Z}$. must be isomorphic. 

\bigskip
(v)  From (iv) and corollary \ref{cor3.2} one gets an isomorphism
of the $C^*$-algebras   $\mathcal{O}_A\otimes\mathscr{K}\cong
 \mathcal{O}_{A'}\otimes\mathscr{K}$. In other words, the Cuntz-Krieger 
 algebras $\mathcal{O}_A$ and $\mathcal{O}_{A'}$ are Morita
 equivalent. 
 
 \bigskip
This argument finishes the proof of lemma \ref{lm3.3}. 
 \end{proof}

\bigskip
Theorem \ref{thm1.1} follows from lemma \ref{lm3.3}.

\subsection{Proof of corollary \ref{cor1.2}}
We split the proof in the following steps.

\bigskip
(i) Let $\phi, \phi'\in\mathscr{L}$ be conjugate Latt\`es map. Denote by $\mathcal{O}_A, \mathcal{O}_{A'}\in\mathscr{O}$ 
the corresponding Cuntz-Krieger algebras (theorem \ref{thm1.1}). 

\bigskip
(ii) Using lemma \ref{lm3.1} when $\epsilon$ is an automorphism of $\mathbb{A}_{RM}$,
 we conclude that  dimension groups $(\Lambda_A, \Lambda^+_A)$ and $(\Lambda_{A'}, \Lambda^+_{A'})$
are  isomorphic.

\bigskip
(iii)  In view of (ii),  one can apply Krieger's Theorem ([Wagoner 1999] \cite[Theorem 2.11]{Wag1})
saying that  $(X_A, \sigma_A)$ and $(X_{A'}, \sigma_{A'})$ are shift 
equivalent over $\mathbf{Z}^+$ if and only if 
 $(\Lambda_A, \Lambda^+_A)$ and $(\Lambda_{A'}, \Lambda^+_{A'})$
are  isomorphic dimension groups.

\bigskip
(iv) The converse statement is proved similarly  and is left to the reader.

 \bigskip
 Corollary \ref{cor1.2} follows from items (i)-(iv).

\subsection{Proof of corollary \ref{cor1.3}}
Roughly speaking, corollary \ref{cor1.3} follows from the Lefschetz fixed-point theorem 
applied to  the compact topological space $X_A$ and  corollary \ref{cor1.2}. 
Let us pass to a step by step argument. 

\bigskip
(i) In view of corollary \ref{cor1.2}, the dynamics of the Latt\`es 
map $\phi$ is encoded by the subshift $(X_A, \sigma_A)$, where 
$\mathcal{O}_A=F(\phi)$.  
In particular, the set $Per_n(\phi)$ has the same cardinality as the set $Per_n(\sigma_A)$
of all $n$-periodic points $x\in X_A$.

\bigskip
(ii) It is well known, that $|Per_n(\sigma_A)|=tr(A^n)$, see e.g. [Lind \& Marcus 1995]\cite[p.195]{LM}
or [Wagoner 1999] \cite[p. 273]{Wag1}.  This formula follows from the Lefschetz fixed-point theorem 
applied to  the compact topological space $X_A$.

\bigskip
(iii) Let $\lambda$ and $\bar\lambda$ be the complex conjugate eigenvalues of the matrix $A$. 
 Bringing $A$ to the diagonal form 
$A=\left(\begin{smallmatrix}  \lambda & 0\cr 0 & \bar\lambda\end{smallmatrix}\right)$,
one gets $A^n=\left(\begin{smallmatrix}  \lambda^n & 0\cr 0 & \bar\lambda^n\end{smallmatrix}\right)$
and therefore $tr(A^n)=\lambda^n+\bar\lambda^n$.

\bigskip
(iv)  One can write (\ref{eq1.3}) in the form:
\begin{equation}\label{eq3.6}
\zeta_{\phi}(t)=\exp \left(\sum_{n=1}^{\infty} {(\lambda t)^n+(\bar\lambda t)^n\over n}\right)=
\exp \left(\ln {1\over (1-\lambda t)(1-\bar\lambda t)}\right),  
\end{equation} 
where the second equality follows from the formula for the sum of geometric progression  and 
formal integration of the obtained series.  On gets  from (\ref{eq3.6}):   
\begin{equation}\label{eq3.7}
\zeta_{\phi}(t)=
 {1\over (1-\lambda t)(1-\bar\lambda t)}=
 {1\over det ~(I-tA)}=
 {1\over det (A) ~t^2-tr (A) ~t+1}. 
\end{equation} 

\bigskip
Corollary \ref{cor1.3} follows from formula (\ref{eq3.7}). 

\section{Example}
We conclude by an example illustrating  theorem \ref{thm1.1} and corollaries
\ref{cor1.2}-\ref{cor1.3}. For simplicity,   we consider 
the Latt\`es map coming from an elliptic curve with complex multiplication. 
In this case,  an explicit formula for the functor $F: \mathscr{L}\to \mathscr{O}$
can be derived from example \ref{exm1.2}.

\begin{example}
Let  $\mathscr{E}_{CM}$ be an elliptic curve  with  complex multiplication by $\sqrt{-2}$  
given by the equation:
\begin{equation}
y^2=x^3+4x^2+2x. 
\end{equation}

\bigskip
(i)  One can  write $\mathscr{E}_{CM}\cong\mathbf{C}/L_{CM}$,
where $L_{CM}=\mathbf{Z}+\mathbf{Z}\sqrt{-2}$ is a lattice in the complex plane 
$\mathbf{C}$. 
Let $\psi: \mathscr{E}_{CM}\to \mathscr{E}_{CM}$ be an
endomorphism defined as  multiplication of the 
$L_{CM}$ by  $\sqrt{-2}$.  The norm $N(\sqrt{-2})=2$ and therefore the degree
of $\psi$ is equal to $2$. The duplication formulas (\ref{eq1.1}). with $a=4, ~b=2$ and $c=0$
 imply that the corresponding Latt\`es map has the form:
\begin{equation}\label{eq4.2}
\phi(x)={(x^2-2)^2\over x(x^2+4x+2)}. 
\end{equation}

\bigskip
(ii) Recall that $\mathscr{F}(\mathscr{E}_{CM})=\mathscr{A}_{\sqrt{2}}$ (example \ref{exm1.2}). 
The endomorphism $\psi$ transforms into an endomorphism of the pseudo-lattice 
$\Lambda=\mathbf{Z}
+\mathbf{Z}\sqrt{2}\subset \mathbf{R}$ given as multiplication by $\sqrt{2}$. 
Denoting  by $N$ and $Tr$  the norm and the trace of an algebraic number, one gets
the matrix form of the algebraic number:
\begin{equation}\label{eq4.3}
A=\left(\begin{matrix} 0 & 1\cr -N(\sqrt{2}) & Tr(\sqrt{2})\end{matrix}\right)=
\left(\begin{matrix} 0 & 1\cr 2 & 0\end{matrix}\right).
\end{equation}

\bigskip
(iii) It is easy to see, that $\sqrt{2} ~\Lambda=2\mathbf{Z}+\mathbf{Z}\sqrt{2}$ and therefore
$\Lambda_A\cong \mathbf{Z}+\mathbf{Z}{\sqrt{2}\over 2}$. 
Since the continued fraction ${\sqrt{2}\over 2}=[0; 1, \overline{2}]$ has  period $\overline{2}$, we conclude that
\begin{equation}\label{eq4.4}
T=
\left(\begin{matrix} 2 & 1\cr 1 & 0\end{matrix}\right).
\end{equation}

\bigskip
(iv) The zeta function of the Latt\`es map (\ref{eq4.2}) can be calculated by a substitution of the matrix 
(\ref{eq4.3}) in  the formula (\ref{eq1.4}): 
\begin{equation}\label{eq4.5}
\zeta_{\phi}(t)={1\over 1-2t^2}. 
\end{equation}
\end{example}

\bibliographystyle{amsplain}


\end{document}